\numberwithin{equation}{section}
\theoremstyle{plain}
\newtheorem{theorem}{Theorem}[section]
\numberwithin{theorem}{section}
\newtheorem{definition}[theorem]{Definition}
\newtheorem{remark}[theorem]{Remark}
\newcommand{\cD}{\mathcal{D}}
\newcommand{\cF}{\mathcal{F}}
\newcommand{\cL}{\mathcal{L}}
\newcommand{\RR}{\mathbb{R}}
\DeclareMathOperator{\supp}{supp}
\DeclareMathOperator{\id}{id}
\title{Comparison of Markov processes by the martingale comparison method}
\author{Benedikt K\"opfer\footnote{A LGFG grant of the state Baden-W\"urttemberg is gratefully acknowledged}, Ludger R\"uschendorf}
\date{}
\begin{document}

\maketitle
\thispagestyle{empty}

\begin{abstract}
Comparison results for Markov processes w.r.t. function class induced (integral) stochastic orders have a long history. The most general results so far for this problem have been obtained based on the theory of evolution systems on Banach spaces. In this paper we transfer the martingale comparison method, known for the comparison of semimartingales to Markovian semimartingales, to general Markov processes. The basic step of this martingale approach is the derivation of the supermartingale property of the linking process, giving a link between the processes to be compared. In this paper this property is achieved using in an essential way the characterization of Markov processes by the martingale problem. As a result the martingale comparison method gives a comparison result for Markov processes under a general alternative set of regularity conditions compared to the evolution system approach.
\end{abstract}

\renewcommand{\thefootnote}{}
\footnotetext{\hspace*{-.51cm}%
AMS 2010 subject classification:
Primary: 60E15; Secondary: 60J25.\\
Key words and phrases: Ordering of Markov processes, martingale comparison method, evolution systems.}

{\renewcommand{\thefootnote}{\arabic{footnote}}

\section{Evolution systems and comparison of Markov processes}
\label{sec:evscomp}

Stochastic ordering and comparison results for Markov processes are a basic problem of probability theory. They have a long history and are motivated by a number of applications in a variety of fields (see \citet{Ma87}, \citet{Co96}, \citet{DS06}, \citet{Ru08}, \citet{KM09}, \citet{RW11}, \citet{RWS16}, \citet{Cr17} and \citet{Cr19}). Various approaches ranging from analytic to coupling methods have been developed to this aim sometimes in the context of specific models or specific applications. The most general comparison results so far have been obtained based on the theory of evolution systems on Banach spaces (see \citet{RWS16}).

The transition operators $T_{s,t}$, $s\le t$, of a Markov process $X$ with values in a metric space $S$ are an evolution system on the space of measurable bounded real-valued functions $\mathcal{L}_b(S)$. Since the transition operators are defined by conditional expectations it is possible to consider them also on function spaces different from $\mathcal{L}_b(S)$. In order to stay in the framework of evolution systems, we consider the transition operators on Banach spaces. We also assume that the Banach spaces in use consist of integrable functions in the sense that they are integrable with respect to all conditional laws. Generally a family of bounded linear operators $(T_{s,t})_{s \le t}$ from a Banach space $\mathbbm{B}$ to $\mathbbm{B}$ is called an \emph{evolution system} if for all $0 \le s \le t \le u$ holds
\begin{enumerate}
\item $T_{s,s} = \id$,
\item $T_{s,u} = T_{s,t} T_{t,u}$.
\end{enumerate}
An evolution system is called strongly continuous if for all $f \in \mathbbm{B}$ the $\mathbbm{B}$-valued function $(s,t) \mapsto T_{s,t}f$ is continuous.
If the evolution system is time-homogeneous, i.e. it only depends on the duration $t-s$, then $(T_t)_{t \ge 0}$ defined by $T_t f := T_{0,t} f$ is a semigroup. An evolution system $(T_{s,t})_{s \le t}$ is called a Feller evolution system if it is strongly continuous and maps $C_0(S)$ into itself. If the evolution system maps $C_b(S)$ into itself, we call it a $C_b$-Feller evolution system. Further, if the transition operators of a Markov process $X$ are a ($C_b$-)Feller evolution system, $X$ is called a ($C_b$-)Feller process.

Left and right generators of evolution systems $(T_{s,t})$ on a Banach space $\mathbbm{B}$ are defined by:
\begin{align*}
A_s^+f := \lim_{h \downarrow 0} \frac{T_{s,s+h}f - f}{h} ~~~\text{for all}~ s \in \RR_+.
\end{align*}
This operator is defined on its domain $\cD(A^+_s)$, i.e. for all $f \in \mathbbm{B}$ for which the limit exists in the norm. Analog we define the left generators on the domain $\cD(A^-_s)$ by
\begin{align*}
A_s^-f := \lim_{h \downarrow 0} \frac{T_{s-h,s}f - f}{h} ~~~\text{for all}~ s \in \RR_+ \setminus \{0\}.
\end{align*}
If we weaken the limit in the definitions of left and right generators to a pointwise limit, then the corresponding operators are called extended pointwise right and left generators (see \citet{GC06}). The generators of an evolution system on a Banach space $\mathbbm{B}$ are linear operators on $\mathbbm{B}$. In general the right generator and left generator do not coincide. In \citet{Bo14} an explicit example for a Markov process is given whose right and left generators do not coincide. There also a condition is given to imply equality for the left and right generators.

Evolution systems arise as solutions of homogeneous evolution problems. Let $(T_{s,t})_{s \le t \le T}$ be an evolution system on some Banach space $\mathbbm{B}$. We set
\begin{align*}
\cD_+(t) := \{ f \in \mathbbm{B} ; s \mapsto T_{s,t}f ~\text{is differentiable from the right on } (0,t)\},
\end{align*}
and
\begin{align*}
\cD_+^A(s) := \bigcap_{s \le t} \cD(A_t), ~~ \cD^A(s,t) := \bigcap_{s \le u \le t} \cD(A_u).
\end{align*}
The following theorem restates basic connections of evolution systems to their right generators from \citet{GC06} and states some corresponding representation results. 

\begin{theorem}
\label{thm:forbaceqti}
Let $(T_{s,t})_{s \le t \le T}$ be an evolution system on a Banach space $\mathbbm{B}$ with right generators $(A^+_t)_{t \in [0,T)}$. Then the following assertions hold true:
\begin{enumerate}
\item If $(T_{s,t})_{s\le t \le T}$ is strongly continuous, then for fixed $t$ the function $u: s \mapsto T_{s,t}f$ with $f \in \cD_+(t)$ is a solution to the following final value problem on $(0,t)$
\begin{align}
\label{eq:evsback}
\begin{cases}
\frac{\partial^+}{\partial s} u(s) &= -A^+_s u(s),\\
\lim_{s \uparrow t} u(s) &= f.
\end{cases}
\end{align}
		
\item For $f \in \cD_+^{A^+}(s)$, fixed $0 < s < T$ and for every $s < t < T$ the forward equation holds:
\begin{align}
\label{eq:evsforward}
\frac{\partial^+}{\partial t} T_{s,t}f = T_{s,t}A^+_tf.
\end{align}

\item Representation results: Let $(T_{s,t})_{s\le t \le T}$ is strongly continuous and $f \in \cD_+(t)$. Further, assume that the right derivative $\frac{\partial^+}{\partial u} T_{u,t}f$ is integrable on $[s,t]$. Then the following integral representation of the evolution system holds true
\begin{align*}
T_{s,t}f - f = \int_s^t A^+_u T_{u,t} f du.
\end{align*}
If $f \in \cD^{A^+}(s,t)$ and the right derivative $\frac{\partial^+}{\partial u} T_{s,u} f$ is integrable on $[s,t]$, then it holds that
\begin{align}
\label{eq:darsttoright}
T_{s,t}f - f = \int_s^t T_{s,u}A^+_uf ~du.
\end{align}
\end{enumerate}
\end{theorem}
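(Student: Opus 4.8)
The plan is to prove the three assertions in their natural logical order, deriving the integral representations in part~(3) from the pointwise differential identities established in parts~(1) and~(2). In all three cases the engine is the factorization furnished by the evolution property $T_{s,u}=T_{s,t}T_{t,u}$, combined with either strong continuity or the boundedness of the transition operators.

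I would begin with the forward equation in part~(2), which is the most transparent and displays the basic mechanism. For $f\in\cD_+^{A^+}(s)$ and $s<t<T$, the evolution property $T_{s,t+h}=T_{s,t}T_{t,t+h}$ gives, for $h>0$,
\begin{align*}
\frac{T_{s,t+h}f - T_{s,t}f}{h} = T_{s,t}\,\frac{T_{t,t+h}f - f}{h}.
\end{align*}
Since $f\in\cD(A^+_t)$, the difference quotient on the right converges in $\mathbbm{B}$ to $A^+_t f$ as $h\downarrow 0$; because $T_{s,t}$ is a bounded, hence continuous, linear operator the limit passes inside, yielding $\frac{\partial^+}{\partial t}T_{s,t}f = T_{s,t}A^+_t f$.

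The harder part is the backward equation in part~(1), where the differentiation is in the initial variable $s$ and the generator difference quotient must be applied to a moving argument. Using $T_{s,t}f = T_{s,s+h}T_{s+h,t}f$ I would write, for $h>0$,
\begin{align*}
\frac{u(s+h)-u(s)}{h} = \frac{T_{s+h,t}f - T_{s,t}f}{h} = -\frac{T_{s,s+h}-\id}{h}\,T_{s+h,t}f.
\end{align*}
The main obstacle is that $\frac{T_{s,s+h}-\id}{h}$ is unbounded as $h\downarrow 0$ yet acts on the $h$-dependent element $T_{s+h,t}f$. To handle this I would split off the fixed target $u(s)=T_{s,t}f$, obtaining
\begin{align*}
-\frac{T_{s,s+h}-\id}{h}T_{s+h,t}f = -\frac{T_{s,s+h}-\id}{h}T_{s,t}f - \frac{T_{s,s+h}-\id}{h}\bigl(T_{s+h,t}f - T_{s,t}f\bigr).
\end{align*}
Because $f\in\cD_+(t)$, the map $u$ is right-differentiable, so $T_{s+h,t}f - T_{s,t}f = h\,g(s)+o(h)$ with $g(s)=\frac{\partial^+}{\partial s}u(s)$. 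Substituting this into the last term, the contribution $-(T_{s,s+h}-\id)g(s)$ tends to $0$ by strong continuity, while the $o(h)$ remainder is controlled by the uniform boundedness of $\{T_{s,s+h}\}_{0\le h\le\delta}$ on a compact time interval (via Banach--Steinhaus), since $\frac{1}{h}\|o(h)\|\to 0$. Hence the second term vanishes, which simultaneously shows $u(s)\in\cD(A^+_s)$ and identifies the limit of the first term as $-A^+_s u(s)$, giving $\frac{\partial^+}{\partial s}u(s)=-A^+_s u(s)$. The final condition $\lim_{s\uparrow t}u(s)=f$ is immediate from strong continuity and $T_{t,t}=\id$.

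For the representations in part~(3) I would integrate the two differential identities just obtained and invoke a fundamental-theorem-of-calculus statement for one-sided derivatives: a continuous $\mathbbm{B}$-valued function on $[s,t]$ whose right derivative is integrable equals the integral of that right derivative. Applied to $u\mapsto T_{u,t}f$, whose right derivative $-A^+_u T_{u,t}f$ is integrable by hypothesis, this yields $f - T_{s,t}f = -\int_s^t A^+_u T_{u,t}f\,du$, the first representation; applied to $u\mapsto T_{s,u}f$, with integrable right derivative $T_{s,u}A^+_u f$, and using $T_{s,s}=\id$, it yields $T_{s,t}f - f = \int_s^t T_{s,u}A^+_u f\,du$. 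The only delicate point here is the validity of the fundamental theorem for right derivatives under mere integrability, and the stated integrability assumptions on $\frac{\partial^+}{\partial u}T_{u,t}f$ and $\frac{\partial^+}{\partial u}T_{s,u}f$ are precisely what this step requires.
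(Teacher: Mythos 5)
Your proposal is correct. Note first that the paper itself gives no proof of this theorem: it is explicitly a restatement of results from \citet{GC06}, so there is no internal argument to compare against; your derivation is the standard one and fills this gap correctly. Part~(2) via $T_{s,t+h}=T_{s,t}T_{t,t+h}$ and boundedness of $T_{s,t}$ is exactly right. Your treatment of part~(1) is the correct way to handle the unbounded difference quotient $\frac{1}{h}(T_{s,s+h}-\id)$ acting on the moving element $T_{s+h,t}f$: splitting off the fixed target $T_{s,t}f$, killing $(T_{s,s+h}-\id)g(s)$ by strong continuity, and controlling the $o(h)$ remainder by the uniform bound $\sup_{0\le h\le\delta}\|T_{s,s+h}\|<\infty$, which indeed follows from strong continuity via Banach--Steinhaus. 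A nice byproduct you rightly point out: since the total difference quotient converges (as $f\in\cD_+(t)$) and the remainder term vanishes, the limit of $\frac{1}{h}(T_{s,s+h}-\id)T_{s,t}f$ exists, which \emph{establishes} $T_{s,t}f\in\cD(A^+_s)$ --- a condition the statement of \eqref{eq:evsback} tacitly requires. For part~(3), the fundamental theorem of calculus for right derivatives that you flag as the delicate point is genuinely valid under exactly the stated hypotheses: the classical proof of the FTC for everywhere-differentiable functions with integrable derivative (via Vitali--Carath\'eodory upper envelopes, as in Rudin) uses only right-hand difference quotients, and the $\mathbbm{B}$-valued case reduces to the scalar one by applying bounded functionals and Hahn--Banach. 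One small point worth making explicit: in the second representation you invoke the pointwise computation of part~(2) only for $u\in[s,t]$, which is precisely why the weaker domain condition $f\in\cD^{A^+}(s,t)$ (rather than $f\in\cD^{A^+}_+(s)$) suffices there.
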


A similar integral representation also holds true for $f \in \cD^{A^-}(s,t)$ and the left derivative.

\begin{theorem}
\label{thm:backti}
Let $(T_{s,t})_{s \le t \le T}$ be an evolution system on some Banach space $\mathbbm{B}$ with left generators $(A^-_t)_{t \in (0,T]}$ and define 
\begin{align*}
\cD_-(t) := \{ f \in \mathbbm{B}; s \mapsto T_{s,t} \text{ is differentiable from the left on } (0,t)\}
\end{align*}
and
\begin{align*}
F_t := \left \{ f \in \mathbbm{B}; \lim_{h \downarrow 0} T_{t-h,t}f = f \right \}.
\end{align*}
Then it holds that:
\begin{enumerate}
\item For $0 < t < T$ and $f \in \cD_-(t) \cap F_t$, the function $u: s \mapsto T_{s,t}f$ is a solution to the backward equation on $(0,t)$
\begin{align*}
\begin{cases}
\frac{\partial^-}{\partial s} u(s) &= -A^-_s u(s),\\
\lim_{s \uparrow t} u(s) &= f.
\end{cases}
\end{align*}

\item If $(T_{s,t})_{s\le t \le T}$ is strongly continuous in $s$, $f \in \cD_-(t)$ and the left derivative $\frac{\partial^-}{\partial s} T_{s,t}f$ is integrable on $[s,t]$, we have the following integral representation of the evolution system
\begin{align*}
T_{s,t}f - f = \int_s^t A^-_u T_{u,t} f du.
\end{align*}
\end{enumerate}
\end{theorem}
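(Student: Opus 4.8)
The plan is to mirror the structure of the proof of Theorem~\ref{thm:forbaceqti}, replacing the forward use of the evolution property by its backward counterpart. For part~(1) I would start from $u(s) = T_{s,t}f$ and compute the left difference quotient directly. Using the evolution property $T_{s-h,t} = T_{s-h,s}\,T_{s,t}$ together with $T_{s,s} = \id$, I would write
\begin{align*}
\frac{u(s) - u(s-h)}{h} = \frac{T_{s,t}f - T_{s-h,s}(T_{s,t}f)}{h} = -\,\frac{T_{s-h,s}(T_{s,t}f) - T_{s,t}f}{h}.
\end{align*}
As $h \downarrow 0$ the right-hand side is, by definition, $-A^-_s$ applied to the element $g := T_{s,t}f$. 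The hypothesis $f \in \cD_-(t)$ guarantees that the left difference quotient of $u$ converges, and the identity above shows simultaneously that $g = u(s) \in \cD(A^-_s)$ and that $\frac{\partial^-}{\partial s}u(s) = -A^-_s u(s)$. The final-value condition $\lim_{s\uparrow t} u(s) = f$ is then immediate: writing $s = t-h$, this is exactly $\lim_{h\downarrow 0} T_{t-h,t}f = f$, which holds precisely because $f \in F_t$.

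For part~(2) I would apply the fundamental theorem of calculus for Banach-space-valued functions to $v(u) := T_{u,t}f$ on $[s,t]$. Strong continuity in $s$ makes $v$ continuous, and part~(1) identifies its left derivative as $\frac{\partial^-}{\partial u}v(u) = -A^-_u T_{u,t}f$, which is integrable on $[s,t]$ by assumption. Since $v(t) = T_{t,t}f = f$ and $v(s) = T_{s,t}f$, the one-sided fundamental theorem then yields
\begin{align*}
f - T_{s,t}f = v(t) - v(s) = \int_s^t \frac{\partial^-}{\partial u} v(u)\,du = -\int_s^t A^-_u T_{u,t}f\,du,
\end{align*}
and rearranging gives the claimed representation $T_{s,t}f - f = \int_s^t A^-_u T_{u,t}f\,du$.

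The one step requiring genuine care, and which I expect to be the main obstacle, is the justification of the fundamental theorem of calculus in part~(2): the textbook statement is phrased for two-sided derivatives, whereas here only a left derivative is available. I would invoke the one-sided version, namely that a continuous $\mathbbm{B}$-valued function whose left derivative exists on $(s,t)$ and is Bochner-integrable equals the integral of that derivative — the same analytic device that underlies the representation results of Theorem~\ref{thm:forbaceqti}(3) and its left-derivative analog noted in the subsequent remark. The strong-continuity hypothesis enters solely to supply the continuity of $v$ needed for this step; all remaining manipulations are the purely algebraic consequences of the evolution property already exploited in part~(1).
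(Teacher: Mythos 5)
Your proof is correct and takes essentially the approach behind the paper's statement: the paper gives no proof of Theorem \ref{thm:backti} (it is restated from \citet{GC06} and \citet{Ko19}), and the argument there is exactly yours --- part (1) from the evolution property $T_{s-h,t}f = T_{s-h,s}(T_{s,t}f)$ together with the definition of $A^-_s$, which simultaneously yields $T_{s,t}f \in \cD(A^-_s)$ and the backward equation, with $f \in F_t$ supplying the final-value condition, and part (2) from the one-sided fundamental theorem of calculus for continuous $\mathbbm{B}$-valued functions with Bochner-integrable left derivative. Your flagged step is the genuine one, and it is the same device the paper already invokes for Theorem \ref{thm:forbaceqti}(3); it reduces to the scalar one-sided FTC via bounded linear functionals and Hahn--Banach, with strong continuity entering only to give continuity of $v$ on $[s,t]$ including the endpoint $v(t)=f$, just as you say.
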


For further extensions and properties of the notion of left (right) generators see \citet{Ca11}, \citet{Bo14} and \citet{Ko19}.

A basic result in the theory of evolution systems is the following integral representation for solutions to an inhomogeneous evolution problem (see \citet{RWS16}). For fixed $s,t \in \RR_+$ let $(A_r)_{s \le r \le t}$ be a family of linear operators on a Banach space $\mathbbm{B}$ and let $G : [s,t] \to \mathbbm{B}$. A function $u: [s,t] \to \mathbbm{B}$, right differentiable on $(s,t)$ and such that $u(r) \in \cD(A_r)$ for all $s \le r < t$, is called a solution to the \emph{inhomogeneous right evolution problem} with boundary condition $f \in \mathbbm{B}$ if
\begin{align*}
\frac{\partial^+}{\partial r} u(r) =&~ -A_r u(r) + G(r)~~\text{for}~s<r\le t,\\
u(t) =&~ f.
\end{align*} 
If additionally $u$ is continuous on $[s,t]$ it is called a classical solution to the inhomogeneous right evolution problem.\\
On the other hand for $u: [s,t] \to \mathbbm{B}$, left differentiable on $(s,t)$ such that $u(r) \in \cD(A_r)$ for all $s < r \le t$, $u$ is a solution to the inhomogeneous left evolution problem with boundary condition $f \in \mathbbm{B}$ if
\begin{align*}
\frac{\partial^-}{\partial r} u(r) =&~ -A_r u(r) + G(r)~~\text{for}~s < r \le t,\\
u(t) =&~ f.
\end{align*} 
If $u$ is continuous it is called a classical solution.

The representation result is as follows.

\begin{theorem}
\label{thm:intdarst_weakepright}
Let $(T_{s,t})_{s \le t}$ be a strongly continuous evolution system on a Banach space $\mathbbm{B}$ with right generators $(A^+_t)_{t \ge 0}$. For fix $t \in \RR_+$ let $F_t,G : [0,t] \to \mathbbm{B}$ be such that
\begin{enumerate}
\item the function $r \mapsto T_{s,r}G(r)$ is integrable on $[s,t]$,

\item $F_t$ solves the inhomogeneous right evolution problem for the operators $(A^+_s)_{s \le t}$, 
\begin{align*}
\frac{\partial^+}{\partial r} F_t(r) =&~ -A^+_r F_t(r) + G(r).
\end{align*}
\end{enumerate} 
Then the following representation holds
\begin{align*}
F_t(s) = T_{s,t}F_t(t) - \int_s^t T_{s,r} G(r) dr.
\end{align*}
\end{theorem}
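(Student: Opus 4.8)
The plan is to use a variation-of-constants (Duhamel) argument. I introduce the auxiliary map
$$\phi(r) := T_{s,r} F_t(r), \qquad r \in [s,t],$$
and show that its right derivative collapses to $T_{s,r}G(r)$, after which integrating from $s$ to $t$ and using $T_{s,s}=\id$ yields the claim. Concretely, once I establish $\frac{\partial^+}{\partial r}\phi(r) = T_{s,r}G(r)$ on $(s,t)$, integration gives $\phi(t)-\phi(s)=\int_s^t T_{s,r}G(r)\,dr$, and since $\phi(t)=T_{s,t}F_t(t)$ and $\phi(s)=F_t(s)$, rearranging produces the stated representation.

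First I would prove a product rule for the right derivative of $\phi$. Writing the difference quotient as
$$\frac{\phi(r+h)-\phi(r)}{h} = T_{s,r+h}\,\frac{F_t(r+h)-F_t(r)}{h} + \frac{(T_{s,r+h}-T_{s,r})F_t(r)}{h},$$
I treat the two summands separately. For the first, since $F_t$ is right differentiable at $r$ with $\frac{\partial^+}{\partial r}F_t(r)=-A^+_r F_t(r)+G(r)$, and since strong continuity of the evolution system together with the uniform boundedness principle gives $T_{s,r+h}\to T_{s,r}$ strongly with $\sup_{0\le h\le \delta}\|T_{s,r+h}\|<\infty$, this term converges to $T_{s,r}\big(-A^+_r F_t(r)+G(r)\big)$. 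For the second, I invoke the evolution property $T_{s,r+h}=T_{s,r}T_{r,r+h}$ to write it as $T_{s,r}\,\frac{(T_{r,r+h}-\id)F_t(r)}{h}$; because $F_t(r)\in\cD(A^+_r)$ (part of the definition of a solution), the inner quotient converges to $A^+_r F_t(r)$, and boundedness of $T_{s,r}$ lets me pass it through the limit, giving $T_{s,r}A^+_r F_t(r)$. Adding the two limits, the $A^+_r F_t(r)$ contributions cancel and I am left with $\frac{\partial^+}{\partial r}\phi(r)=T_{s,r}G(r)$, which is exactly the forward-equation mechanism underlying Theorem~\ref{thm:forbaceqti}(2).

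The remaining step is to integrate the right derivative. By hypothesis~(1) the map $r\mapsto T_{s,r}G(r)$ is integrable on $[s,t]$, so $\phi$ has an integrable right derivative. Using continuity of $\phi$ on $[s,t]$ (which follows from the joint strong continuity of $(s,r)\mapsto T_{s,r}$, local uniform boundedness, and continuity of the solution $F_t$) together with a fundamental-theorem-of-calculus statement for right derivatives of Banach-space-valued functions, I conclude $\phi(t)-\phi(s)=\int_s^t \frac{\partial^+}{\partial r}\phi(r)\,dr=\int_s^t T_{s,r}G(r)\,dr$, and then substitute the boundary values $\phi(t)=T_{s,t}F_t(t)$ and $\phi(s)=F_t(s)$.

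The main obstacle will be making the two limit passages in the product rule rigorous in the Banach-space setting: one must combine strong (not norm) convergence $T_{s,r+h}\to T_{s,r}$ with local uniform operator bounds in order to interchange limits with the bounded operators, and one must ensure the FTC for right derivatives applies, which requires the continuity of $\phi$ and the integrability afforded by hypothesis~(1). The cancellation itself is purely algebraic and causes no difficulty once the product rule is in place.
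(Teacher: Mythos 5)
Your proof is correct and takes essentially the same route as the paper's source: the paper states this theorem without giving a proof, deferring to \citet{RWS16}, where the argument is precisely your variation-of-constants computation --- differentiating $\phi(r) = T_{s,r}F_t(r)$ via the split difference quotient, using the evolution property $T_{s,r+h} = T_{s,r}T_{r,r+h}$ together with $F_t(r) \in \cD(A^+_r)$ to produce the cancellation of the $T_{s,r}A^+_rF_t(r)$ terms, and then integrating the right derivative. The only point worth flagging is one you already handle: the fundamental-theorem step requires continuity of $\phi$, hence of $F_t$, on $[s,t]$ (i.e.\ $F_t$ should be a \emph{classical} solution, which is the reading consistent with the paper's own use of the result in the proof of Theorem \ref{thm:comprightgen}, where continuity of $F_t$ is explicitly verified), and with that reading your argument is complete.
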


The same representation result also hold for the inhomogeneous left evolution problem for left generators of a strongly continuous evolution system (see \citet{Ko19}). The representation result is the basic tool for the general comparison theorem for Markov processes by means of evolution systems in \citet{RWS16} stating an ordering result of Markov processes w.r.t. function classes $\cF$. Therefore let $X$ and $Y$ be Markov processes with corresponding transition operators $T^X$ and $T^Y$. Under some regularity conditions this result states that a propagation of order property for $X$, i.e. $f \in \cF$ implies $T^X_{s,t}f \in \cF$ and comparison of generators implies the stochastic ordering condition $X_t \le_{\cF} Y_t$, $t \ge 0$.

The following is a reformulation of this result holding true for single functions $f$. Note that for this case where $\cF = \{ f \}$ the propagation of order property does not make sense.

\begin{theorem}
\label{thm:comprightgen}
Assume that $(T^X_{s,t})_{s \le t}$ and $(T^Y_{s,t})_{s \le t}$ are strongly continuous evolution systems on $\mathbbm{B}$ and let $f \in \mathbbm{B}$. If for fixed $t \in \RR_+$ it holds that for all $s \le t$
\begin{enumerate}
\item $T^X_{s,t}f \in \cD(A_s^{Y+})$;

\item $r \mapsto T^Y_{s,r}(A^{X+}_r - A^{Y+}_r) T^X_{s,r}f$ is integrable on $[s,t]$;

\item $A^{X+}_s T^X_{s,t}f \le A^{Y+}_s T^X_{s,t} f$ a.s..
\end{enumerate}
Then it holds that
\begin{align*}
T^X_{s,t} f \le T^Y_{s,t} f ~~\text{a.s. for all } s \le t.
\end{align*}
\end{theorem}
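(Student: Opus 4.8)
The plan is to fix the endpoint $t$ together with an arbitrary starting time $s\le t$, and to interpolate between the two evolutions by means of the \emph{linking function} $F_t(r):=T^X_{r,t}f$ for $r\in[s,t]$, which I read as the candidate solution of an inhomogeneous right evolution problem driven by the \emph{$Y$-generators}. Observe that $F_t(t)=T^X_{t,t}f=f$ while $F_t(s)=T^X_{s,t}f$, and that the composition $r\mapsto T^Y_{s,r}F_t(r)=T^Y_{s,r}T^X_{r,t}f$ genuinely links the two processes: it equals $T^X_{s,t}f$ at $r=s$ and $T^Y_{s,t}f$ at $r=t$. The goal is therefore to show that this link is monotone in $r$, which will follow once the relevant source term is shown to be nonnegative.

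First I would establish that $F_t$ solves an inhomogeneous right evolution problem for the operators $(A^{Y+}_r)_{r\le t}$. By the final value problem in Theorem~\ref{thm:forbaceqti}(1) applied to $T^X$, the linking function satisfies the backward equation $\frac{\partial^+}{\partial r}F_t(r)=-A^{X+}_r T^X_{r,t}f$. Adding and subtracting the $Y$-generator gives
\begin{align*}
\frac{\partial^+}{\partial r}F_t(r) = -A^{Y+}_r F_t(r) + \big(A^{Y+}_r-A^{X+}_r\big)T^X_{r,t}f ,
\end{align*}
so that $F_t$ solves the inhomogeneous right evolution problem for $(A^{Y+}_r)$ with source $G(r):=\big(A^{Y+}_r-A^{X+}_r\big)T^X_{r,t}f$ and final value $f$. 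Here hypothesis (1) (with $s$ renamed to the running time $r$) guarantees $F_t(r)=T^X_{r,t}f\in\cD(A^{Y+}_r)$, so that the $Y$-generator may indeed be applied, while hypothesis (2) provides the integrability of $r\mapsto T^Y_{s,r}G(r)$ needed below.

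I would then invoke the representation result of Theorem~\ref{thm:intdarst_weakepright} for the strongly continuous $Y$-evolution system, which yields
\begin{align*}
T^X_{s,t}f = F_t(s) = T^Y_{s,t}F_t(t) - \int_s^t T^Y_{s,r}G(r)\,dr = T^Y_{s,t}f - \int_s^t T^Y_{s,r}\big(A^{Y+}_r-A^{X+}_r\big)T^X_{r,t}f\,dr ,
\end{align*}
and hence
\begin{align*}
T^Y_{s,t}f - T^X_{s,t}f = \int_s^t T^Y_{s,r}\big(A^{Y+}_r-A^{X+}_r\big)T^X_{r,t}f\,dr .
\end{align*}
It remains to check that the integrand is nonnegative. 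Hypothesis (3), again read with $s$ as the running time $r$, states exactly that $\big(A^{Y+}_r-A^{X+}_r\big)T^X_{r,t}f\ge 0$ a.s. Since the transition operators $T^Y_{s,r}$ of a Markov process are conditional expectations and hence positivity preserving, the whole integrand is a.s.\ nonnegative, so the integral is a.s.\ nonnegative and $T^X_{s,t}f\le T^Y_{s,t}f$ a.s.\ follows; as $s\le t$ was arbitrary, this gives the claim.

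The main obstacle I anticipate is not the sign argument but the careful verification that $F_t$ legitimately qualifies as a (classical) solution of the inhomogeneous right evolution problem for the $Y$-generators, i.e.\ that plugging the $X$-evolution into the $Y$-problem is admissible: this requires the right differentiability of $r\mapsto T^X_{r,t}f$ from Theorem~\ref{thm:forbaceqti}, the domain membership from hypothesis (1), and the integrability from hypothesis (2), together with the strong continuity needed to apply Theorem~\ref{thm:intdarst_weakepright}. The second delicate point is the positivity of the operators $T^Y_{s,r}$ on the Banach space $\mathbbm{B}$, which is what transports the pointwise a.s.\ ordering of the source term through the integral; this relies on $\mathbbm{B}$ carrying the a.s.\ (pointwise) order and on the transition operators respecting it.
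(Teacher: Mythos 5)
Your proof is correct, and it rests on the same two pillars as the paper's: the integral representation of Theorem \ref{thm:intdarst_weakepright} for an inhomogeneous right evolution problem driven by the $Y$-generators, and the positivity of the transition operators $T^Y_{s,r}$. The difference lies in what you feed into that representation. The paper applies it to the \emph{difference} $F_t(s) := T^Y_{s,t}f - T^X_{s,t}f$, which solves the inhomogeneous problem with source $G(s) = (A^{X+}_s - A^{Y+}_s)T^X_{s,t}f$ and terminal value $F_t(t)=0$, so the representation gives $F_t(s) = \int_s^t T^Y_{s,r}(-G(r))\,dr \ge 0$ directly. You instead apply it to the single evolution $F_t(r) := T^X_{r,t}f$ with terminal value $f$, obtaining the Duhamel-type interpolation $T^Y_{s,t}f - T^X_{s,t}f = \int_s^t T^Y_{s,r}\bigl(A^{Y+}_r - A^{X+}_r\bigr)T^X_{r,t}f\,dr$ --- algebraically the same identity, reached by a slightly different decomposition. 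Your variant even has a small technical advantage: you only differentiate $r \mapsto T^X_{r,t}f$ (the backward equation for the $X$-system from Theorem \ref{thm:forbaceqti}), whereas the paper's difference argument also differentiates $s \mapsto T^Y_{s,t}f$ and hence implicitly needs $f$ to lie in the set $\cD_+(t)$ formed with respect to $T^Y$; in your route the $Y$-generator enters only through the domain hypothesis (1). Both arguments share the same implicit assumption, not listed among the hypotheses, that $r \mapsto T^X_{r,t}f$ is right differentiable (membership in $\cD_+(t)$ for $T^X$), and you correctly flag this as the delicate verification. Finally, you read hypothesis (2) as integrability of $r \mapsto T^Y_{s,r}(A^{X+}_r - A^{Y+}_r)T^X_{r,t}f$ rather than the literal $T^X_{s,r}f$ of the statement; this matches the $G$ appearing in the paper's own proof and is evidently the intended reading.
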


\begin{proof}
Set $F_t(s) := T^Y_{s,t}f - T^X_{s,t}f$, then $F_t$ satisfies the equation
\begin{align*}
\frac{\partial^+}{\partial s} F_t(s) = - A^{Y+}_s T^Y_{s,t}f + A^{X+}_s T^X_{s,t}f,
\end{align*}
with boundary condition $F_t(t) = 0$. This equation can be written as 
\begin{align}
\label{eq:ep_proof_compright}
\frac{\partial^+}{\partial s} F_t(s) = - A^{Y+}_s (T^Y_{s,t}f - T^X_{s,t}f) + (A^{X+}_s - A^{Y+}_s) T^X_{s,t}f =: - A^{Y+}_s F_t(s) + G(s),
\end{align}
here $G(s):= (A^{X+}_s - A^{Y+}_s) T^X_{s,t}f$. The terms in the equation are well defined by Theorem \ref{thm:forbaceqti} and Assumption~$1.$ Hence, $F_t$ solves an inhomogeneous right evolution problem.\\
From the strong continuity of the evolution systems we deduce that $F_t$ is continuous in $s$. Hence, $F_t$ is a classical solution to the inhomogeneous right evolution problem \eqref{eq:ep_proof_compright}.\\
We show that $F_t$ is nonnegative; then the assertion follows. To see this we apply the integral representation in Theorem \ref{thm:intdarst_weakepright} to $F_t$ to obtain
\begin{align*}
F_t(s) = T^Y_{s,t} F_t(t) - \int_s^t T^Y_{s,r} G(r) dr = \int_s^t T^Y_{s,r} (-G(r)) dr.
\end{align*}
From Assumption~$3.$ it follows that $-G(r) \ge 0$ a.s. and hence the assertion follows from the fact that the transition operators of Markov processes are positivity preserving operators.
\end{proof}

A similar comparison result also holds for left generators (see \citet{Ko19}).

\section{The martingale comparison method for Markov processes}

For the comparison of a semimartingale $X$ to a Markovian semimartingale $Y$ \citet{GM02} introduced the martingale comparison method. The basic step of this approach is to establish that the \emph{linking process}
\begin{align*}
(T^X_{s,t}f(Y_s))_{0 \le s \le t}
\end{align*}

is a supermartingale for fixed $t$. Note that $T^X_{t,t}f(Y_t) = E[f(Y_t]$ and $T_{0,t}f(x_0) = E[f(X_t)]$ assuming that $X_0 = x_0 = Y_0$. Thus $(T_{s,t}f(Y_s))$ gives a link between the processes $X$ and $Y$. From the supermartingale property of the linking process as a direct consequence the following comparison result is obtained:
\begin{align}
\label{eq:supmgineq}
E[f(Y_t)] = E[T^X_{t,t}f(Y_t)] \le T^X_{0,t}f(x_0) = E[f(X_t)].
\end{align}
If $(T^X_{s,t}f(Y_s))_{0 \le s \le t}$ is a submartingale, the reverse inequality holds. The proof of the supermartingale property is essentially based on It\^o's formula and on a version of Kolmogorov backwards equation for Markovian semimartingales. In this paper we transfer this martingale comparison approach to the comparison of general Markov processes. As main tool we make essentially use of the characterization of Markov processes by the martingale problem. We transfer this classical result (see e.g. \citet[Ch.4., Prop. 1.7]{EK05}) to the frame of Markov processes with transition operators defined on a Banach spaces $\mathbbm{B}$ of integrable functions; for detailed exposition see \citet{Ko19}. 

\begin{theorem}
\label{thm:zshgmarkovmartingale}
Let $(X_t)_{t \in [0,T]}$ be a Markov process with strongly continuous transition operators $(T_{s,t})_{s \le t \le T}$ on some Banach space $\mathbbm{B}$ and corresponding right generators $(A^+_t)_{t \in [0,T)}$. If for $f \in \cD^{A+}_+(0)$ the right derivative $\frac{\partial^+}{\partial t} T_{s,t}f$ is integrable on $[0,T)$, it holds that the process $(M_t)_{t \in [0,T]}$ defined by
\begin{align*}
M_t := f(X_t) - f(X_0) - \int_0^t A^+_s f(X_s) ~ds 
\end{align*}
is an $(\cF_t)_{t \ge 0}$ martingale.
\end{theorem}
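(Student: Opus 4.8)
The plan is to verify the martingale property in its conditional form: it suffices to establish that $M_t$ is adapted and integrable and that $E[M_t - M_s \mid \cF_s] = 0$ for all $0 \le s \le t \le T$. Adaptedness is immediate, since $X$ is adapted and the integral term is a functional of the path up to time $t$. Integrability of $f(X_t)$ and of the path integral follows from the standing assumption that $\mathbbm{B}$ consists of functions integrable with respect to all (conditional) laws, together with the hypothesis that $\frac{\partial^+}{\partial t}T_{s,t}f = T_{s,t}A^+_t f$ is integrable on $[0,T)$, the last equality being the forward equation \eqref{eq:evsforward}.

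For the increment I would write
\[
M_t - M_s = f(X_t) - f(X_s) - \int_s^t A^+_u f(X_u)\, du
\]
and condition on $\cF_s$. The Markov property gives $E[f(X_t)\mid\cF_s] = T_{s,t}f(X_s)$. For the integral term I would interchange the time integral with the conditional expectation by Fubini's theorem and again invoke the Markov property to obtain
\[
E\Big[\int_s^t A^+_u f(X_u)\, du \,\Big|\, \cF_s\Big] = \int_s^t T_{s,u}(A^+_u f)(X_s)\, du,
\]
so that
\[
E[M_t - M_s \mid \cF_s] = T_{s,t}f(X_s) - f(X_s) - \int_s^t T_{s,u}A^+_u f(X_s)\, du.
\]

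It then remains to recognize that the right-hand side vanishes identically. Since $f \in \cD^{A+}_+(0) = \bigcap_{u \ge 0}\cD(A^+_u)$, in particular $f \in \cD^{A^+}(s,t)$, and the assumed integrability of the right derivative is precisely the integrability of $u \mapsto T_{s,u}A^+_u f$ on $[s,t]$ required by the representation \eqref{eq:darsttoright} in Theorem~\ref{thm:forbaceqti}. That representation yields $T_{s,t}f - f = \int_s^t T_{s,u}A^+_u f\, du$ as an identity in $\mathbbm{B}$; evaluating at the point $X_s$ makes the two contributions above cancel, giving $E[M_t - M_s\mid\cF_s] = 0$ and hence the martingale property.

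The main obstacle I anticipate is not this algebraic cancellation but the measure-theoretic bookkeeping surrounding it. One must ensure that $(u,\omega)\mapsto A^+_u f(X_u(\omega))$ is jointly measurable so that the path integral is well defined, and that the integrability is strong enough (e.g.\ $E\int_s^t |A^+_u f(X_u)|\, du < \infty$) both to license the Fubini interchange of integration and conditioning and to justify evaluating the $\mathbbm{B}$-valued identity \eqref{eq:darsttoright} pointwise at $X_s$. Care is also needed because $A^+_u f$ is genuinely time-dependent, so the conditional Markov identity must be applied to the whole family $\{A^+_u f\}_{u}$ rather than to a single function; checking that these functions lie in the spaces on which $T_{s,u}$ acts and on which the conditioning representation is valid is where the domain and regularity hypotheses are really consumed.
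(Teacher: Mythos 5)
Your proposal is correct and takes essentially the same route as the paper's proof: condition on $\cF_s$, apply the Markov property to reduce $E[f(X_t)\mid\cF_s]$ and the integrand over $[s,t]$ to $T_{s,t}f(X_s)$ and $T_{s,u}A^+_uf(X_s)$, then cancel via the representation \eqref{eq:darsttoright}, with the forward equation \eqref{eq:evsforward} linking the theorem's integrability hypothesis to that representation. The paper conditions $M_t$ directly rather than the increment $M_t - M_s$ and treats the integrability and Fubini steps as immediate, but these are presentational differences only.
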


\begin{proof}
The integrability is clear since the Banach space is assumed to consist of integrable functions and the right generator $(A^+_t)_{t \in [0,T)}$ maps the Banach space into itself. Let $0 \le s \le t$, then we have by the Markov property and equation \eqref{eq:darsttoright}
\begin{align*}
E[M_t | \cF_s] =~& E[f(X_t)|X_s] - f(X_0) - \int_0^t E[A^+_u f(X_u)|X_s] ~du \\
=~& T_{s,t}f(X_s) - f(X_0) - \int_s^t T_{s,u} A^+_u f(X_s)~du - \int_0^s A^+_u f(X_u) ~du\\
=~& f(X_s) - f(X_0) - \int_0^s A^+_u f(X_u) ~du\\
=~& M_s.
\end{align*}
This shows the assertion.
\end{proof}

A similar martingale property also holds for the left generators. We also will make use of the martingale property for space time functions $f(t,x)$. To that aim we state the following definition, a variant of the definition in \citet{Ca11} for general Banach spaces. The family of operators $(A^+_t)_{t \in [0,T)}$ is here regarded as single operator $A$ on a bigger space consisting of functions $f: [0,T) \times S \to \RR$. Therefore it is important that the Banach space on which each $A_t$ is defined can be extended resonably to functions of the space time process, like $L^p(\RR^d)$, $\cL_b(\RR^d)$ and the smooth functions vanishing at infinity $C_0^\infty(\RR^d)$.

\begin{definition}
\label{def:definitionfromCasterenBanach}
A family of operators $(A^+_t)_{t \in [0,T)}$ on some Banch space $\mathbbm{B}$ is said to be a right generator of a Markov process $X$ if for all $f \in \cD_+(A)$, for all $x \in S$ and for all $s \le t$ it holds that
\begin{align*}
\frac{\partial^+}{\partial t} E\left [f(t,X_t)|X_s = x \right ] = E \left [ \left .\frac{\partial^+}{\partial t} f(t,X_t) + A^+_t f(t,\cdot)(X_t) \right | X_s = x \right ].
\end{align*} 
A family of operators $(A^-_t)_{t \ge 0}$ on $\mathbbm{B}$ is said to be a left generator of $X$ if we replace the right derivatives above by left derivatives.
\end{definition}

We remark that also the extended pointwise right and left generators $(A^+_t)$ and $(A^-_t)$ of strongly continuous transition operators are right and left generators in the sense of Definition \ref{def:definitionfromCasterenBanach} (see \citet{Ko19}). 

With the help of this definiton we can formulate the martingale property for the space time process.
\begin{theorem}
\label{thm:zshgmarkovmartingalestatetime}
Let $(A^+_t)_{t \in [0,T)}$ be the right generator of a Markov process $X$ in the sense of Definition \ref{def:definitionfromCasterenBanach}. Then for every $f \in \cD_+(A)$ such that $\frac{\partial^+}{\partial t}E[f(t,X_t)|X_s]$ is integrable on $[0,T)$, we have that
\begin{align*}
M_t := f(t,X_t) - f(0,X_0) - \int_0^t \left (\frac{\partial^+}{\partial s} + A^+_s\right ) f(s,X_s) ds
\end{align*}
is an $(\cF_t)_{t \ge 0}$ martingale.
\end{theorem}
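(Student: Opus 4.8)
The plan is to follow the proof of Theorem~\ref{thm:zshgmarkovmartingale} closely, with the integral representation~\eqref{eq:darsttoright} replaced by the space--time relation encoded in Definition~\ref{def:definitionfromCasterenBanach}. Integrability of $M_t$ is immediate from the standing assumptions, so it suffices to verify $E[M_t\mid\cF_s]=M_s$ for all $0\le s\le t\le T$. Conditioning on $\cF_s$, interchanging the conditional expectation with the time integral by a conditional Fubini argument, splitting $\int_0^t=\int_0^s+\int_s^t$, and pulling the $\cF_s$-measurable integrand over $[0,s]$ out of the conditional expectation, I obtain
\[
E[M_t\mid\cF_s] = E[f(t,X_t)\mid\cF_s] - f(0,X_0) - \int_0^s \left(\tfrac{\partial^+}{\partial u}+A^+_u\right) f(u,X_u)\,du - \int_s^t E\!\left[ \Bigl(\tfrac{\partial^+}{\partial u}+A^+_u\Bigr) f(u,X_u)\,\Big|\,\cF_s \right] du .
\]
By the Markov property each conditional expectation over the horizon $u\ge s$ may be taken with respect to $X_s$ alone.

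Thus everything reduces to the space--time integral representation
\[
E[f(t,X_t)\mid X_s] - f(s,X_s) = \int_s^t E\!\left[ \Bigl(\tfrac{\partial^+}{\partial u}+A^+_u\Bigr) f(u,X_u)\,\Big|\,X_s \right] du ,
\]
since inserting it into the previous display makes the two $\int_s^t$ contributions cancel and leaves exactly $M_s = f(s,X_s)-f(0,X_0)-\int_0^s(\tfrac{\partial^+}{\partial u}+A^+_u)f(u,X_u)\,du$. To prove it I fix $s$ and $x\in S$ and set $g(u):=E[f(u,X_u)\mid X_s=x]$ on $[s,t]$. Definition~\ref{def:definitionfromCasterenBanach} states precisely that $g$ is right differentiable with
\[
\tfrac{\partial^+}{\partial u} g(u) = E\!\left[ \Bigl(\tfrac{\partial^+}{\partial u}+A^+_u\Bigr) f(u,X_u)\,\Big|\,X_s=x \right],
\]
and this right derivative is integrable on $[s,t]$ by hypothesis. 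Integrating over $[s,t]$ via the fundamental theorem of calculus for right derivatives, and using $g(s)=f(s,x)$, gives the representation pointwise in $x$, hence as an identity of $\sigma(X_s)$-measurable random variables.

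The step I expect to be the main obstacle is precisely this passage from the infinitesimal identity of Definition~\ref{def:definitionfromCasterenBanach} to its integrated form, i.e. the assertion $g(t)-g(s)=\int_s^t\frac{\partial^+}{\partial u}g(u)\,du$. The fundamental theorem of calculus for one--sided derivatives requires, in addition to integrability of the right derivative, continuity of $u\mapsto g(u)$; I would obtain this continuity from the strong continuity of the transition operators that underlies the setup, which makes $u\mapsto T_{s,u}f(\cdot)$ and hence $u\mapsto E[f(u,X_u)\mid X_s]$ continuous. A secondary technical point is the conditional Fubini interchange used at the outset, which is justified by the integrability hypothesis. With these regularity issues resolved, the remaining recombination of terms is routine and entirely parallel to Theorem~\ref{thm:zshgmarkovmartingale}.
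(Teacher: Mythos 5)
Your proof is correct and follows essentially the same route as the paper's: condition on $\cF_s$, split the time integral at $s$, use Definition~\ref{def:definitionfromCasterenBanach} to convert the conditional expectation of the generator term on $[s,t]$ into the right derivative of $u \mapsto E[f(u,X_u)\mid X_s]$, and integrate this derivative back to $E[f(t,X_t)\mid X_s] - f(s,X_s)$. The only difference is presentational: you make explicit the fundamental-theorem-of-calculus step for right derivatives (and the continuity of $u \mapsto E[f(u,X_u)\mid X_s]$ it requires), which the paper applies silently in the third equality of its computation.
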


\begin{proof}
Again the integrability is clear and the martingale property can be shown straightforward:
\begin{align*}
E[M_t|\cF_s] =~& E[f(t,X_t)|X_s] - f(0,X_0) - \int_0^t E \left [ \left .\left (\frac{\partial^+}{\partial u} + A^+_u\right ) f(u,X_u)\right |X_s \right ] du\\
=~& E[f(t,X_t)|X_s] - f(0,X_0) - \int_s^t E \left [ \left .\left (\frac{\partial^+}{\partial u} + A^+_u\right ) f(u,X_u)\right |X_s \right ] du\\
~& - \int_0^s \left (\frac{\partial^+}{\partial u} + A^+_u\right ) f(u,X_u) du\\
=~& E[f(t,X_t)|X_s] - f(0,X_0) - \int_s^t \frac{\partial^+}{\partial u}E \left [ f(u,X_u)|X_s \right ] du\\
~& - \int_0^s \left (\frac{\partial^+}{\partial u} + A^+_u\right ) f(u,X_u) du\\
=~& E[f(t,X_t)|X_s] - f(0,X_0) - E[f(t,X_t)|X_s] + f(s,X_s)\\
&~ - \int_0^s \left (\frac{\partial^+}{\partial u} + A^+_u\right ) f(u,X_u) du\\
=~& M_s.
\end{align*}
This completes the proof.
\end{proof}

\begin{remark}
\begin{enumerate}
\item In \citet{Ca11} a similar result is given under the assumption that the function $f$ to be continuously differentiable in the time variable.

\item The proof of Theorem \ref{thm:zshgmarkovmartingalestatetime} can also be adapted for the extended pointwise right and left generators of the transition operators of $X$. Thus, Theorem \ref{thm:zshgmarkovmartingale} also holds for the extended pointwise right and left generators.
\end{enumerate}
\end{remark}

The connection of Markov processes to martingales allows the introduction of further extensions of generators. Therefore, we give some definitions which are motivated by Theorem \ref{thm:zshgmarkovmartingale}. They are variants of definitions form \citet{Ci80} for time-inhomogeneous Markov processes.

\begin{definition}
Let $(A_t)_{t \ge 0}$ be a family of operators with domains $(\cD(A_t))_{t \ge 0}$. It is called \emph{extended generator} of a Markov process $X$ if $\cD_+^A(0)$ consists of measurable functions $f: S \to \RR$ such that for all $t \ge 0$ the functions $A_tf: S \to \RR$ are measurable and
\begin{align*}
f(X_t) - f(X_0) - \int_0^t A_sf(X_s) ds
\end{align*}
is well defined and a local martingale.
\end{definition}

Note that it makes no sense to distinguish between left and right generators since here the interpretation as partial semi-differential of the underlying evolution system is not taken. Also there is no restriction to Banach spaces as domains.

The same definition can be given for the space-time process. Recall that the Banach space under consideration has to be extendable to the space-time process.

\begin{definition}
\label{def:extendedgencinlar}
Let $(A_t)_{t \ge 0}$ be family of operators with domains $(\cD(A_t))_{t \ge 0}$. It is called \emph{extended right generator} of the space-time process of $X$ if the cuts of the domains $\cD_+(A)$ consists of measurable functions $f: \RR_+ \times S \to \RR$ such that for all $t \ge 0$ the function $A_tf:\RR_+ \times S \to \RR$ is measurable and
\begin{align*}
f(t,X_t) - f(0,X_0) - \int_0^t \left (\frac{\partial^+}{\partial s} + A_s \right )f(s,X_s) ds
\end{align*}
is well defined and a local martingale.\\
If the derivatives above are replaced by left derivatives we call the corresponding family of operators extended left generators.
\end{definition}

The extended generators can be expanded to other integrals than the Lebesgue integral. This is particularly interesting for example if we consider general Markovian semimartingales with fixed jump times.

\begin{definition}
\label{def:randomgencinlar}
Let $F \in \mathscr{V}^+$ be predictable. A family of operators $(A_t)_{t \ge 0}$, $A_t: \mathcal{L}(S) \to \mathcal{L}(\Omega)$ is called \emph{$F$-random generator} of a Markov process $X$, if $\cD_+^A(0)$ consists of functions $f: \RR^d \to \RR$ for which $(A_tf)_{t \ge 0}$ is an optional process such that $Af \cdot F \in \mathscr{V}$ is predictable and 
\begin{align*}
f(X_t) - f(X_0) - \int_0^t A_sf dF_s
\end{align*}
is well defined and a local martingale.
\end{definition}

Based on the martingale problem we obtain a transfer of the martingale comparison method to the comparison of general Markov processes. 
In the following theorem we consider the transition operators $T^X_{s,t}$ for fixed $t$ and $f \in \mathbbm{B}$ as a function $T^X_{\cdot,t}f: [0,t] \times S \to \RR$. Hence, we can insert the space-time process and obtain the connection to the martingale problem from Theorem \ref{thm:zshgmarkovmartingalestatetime}. Note that we use generators in the sense of Definition \ref{def:definitionfromCasterenBanach}. For processes $X$, $Y$ we denote their (right) generators by $A^{X+}$ and $A^{Y+}$.

\begin{theorem}[Comparison by the martingale comparison method]
\label{thm:compbyprobaright}
Let $(T^X_{s,t})_{s \le t}$ and $(T^Y_{s,t})_{s \le t}$ be strongly continuous and $f \in \mathbbm{B}$. For fixed $t \in \RR_+$ assume that for all $s \le t$ the following holds
\begin{enumerate}
\item $T^X_{\cdot,t}f \in \cD_+(A^{X+}) \cap \cD_+(A^{Y+})$;

\item $\frac{\partial^+}{\partial u}E[T_{u,t}^Xf(X_u)|X_s]$ and $\frac{\partial^+}{\partial u}E[T_{u,t}^Xf(Y_u)|Y_s]$ are integrable on $[0,t]$;

\item $\supp(P^{Y_s}) \subset \supp(P^{X_s})$;

\item $A^{X+}_s T^X_{s,t} f \ge A^{Y+}_s T^X_{s,t} f$ a.s.
\end{enumerate}
Then it holds that 
\begin{align*}
E[f(Y_t)] \le E[f(X_t)].
\end{align*}
\end{theorem}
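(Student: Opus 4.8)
The plan is to follow the martingale comparison method outlined in the text: fix $t$ and consider the \emph{linking process}
\begin{align*}
L_s := T^X_{s,t}f(Y_s), \qquad 0 \le s \le t,
\end{align*}
and show that $(L_s)_{0 \le s \le t}$ is a supermartingale with respect to the filtration generated by $Y$. Once this is established, the desired inequality follows exactly as in \eqref{eq:supmgineq}: since $T^X_{t,t} = \id$ we have $L_t = f(Y_t)$, while $L_0 = T^X_{0,t}f(Y_0) = T^X_{0,t}f(x_0) = E[f(X_t)]$ (using $X_0 = Y_0 = x_0$), so the supermartingale property gives $E[f(Y_t)] = E[L_t] \le E[L_0] = E[f(X_t)]$.

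The heart of the argument is the supermartingale property, and here I would use Theorem \ref{thm:zshgmarkovmartingalestatetime} applied to $Y$ with the space-time function $g(s,x) := T^X_{s,t}f(x)$. By Assumption~$1.$ this function lies in $\cD_+(A^{Y+})$, and Assumption~$2.$ supplies the integrability needed to invoke the theorem. Thus the process
\begin{align*}
M_s := T^X_{s,t}f(Y_s) - T^X_{0,t}f(Y_0) - \int_0^s \left( \frac{\partial^+}{\partial u} + A^{Y+}_u \right) T^X_{u,t}f(Y_u)\, du
\end{align*}
is an $(\cF_s)$ martingale. The key identity is that for the fixed target time $t$, the function $u \mapsto T^X_{u,t}f$ solves the backward equation $\frac{\partial^+}{\partial u} T^X_{u,t}f = -A^{X+}_u T^X_{u,t}f$, which is precisely the final value problem \eqref{eq:evsback} from Theorem \ref{thm:forbaceqti} (valid since $T^X_{\cdot,t}f \in \cD_+(A^{X+})$ by Assumption~$1.$). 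Substituting this into the integrand yields
\begin{align*}
\left( \frac{\partial^+}{\partial u} + A^{Y+}_u \right) T^X_{u,t}f(Y_u) = \left( A^{Y+}_u - A^{X+}_u \right) T^X_{u,t}f(Y_u),
\end{align*}
which by Assumption~$4.$ is $\le 0$ almost surely.

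It follows that $L_s = M_s + L_0 + \int_0^s (A^{Y+}_u - A^{X+}_u) T^X_{u,t}f(Y_u)\, du$ decomposes $L$ into a martingale plus a nonincreasing (drift) term, so $L$ is a supermartingale, completing the argument. The main obstacle I anticipate is making the substitution of the backward equation \eqref{eq:evsback} into Definition \ref{def:definitionfromCasterenBanach} fully rigorous: the generator inequality in Assumption~$4.$ holds $P^{X_s}$-almost surely, whereas in the linking process it must be evaluated along the $Y$-paths, i.e.\ $P^{Y_s}$-almost surely. This is exactly what Assumption~$3.$, the support inclusion $\supp(P^{Y_s}) \subset \supp(P^{X_s})$, is designed to bridge—one must argue that an inequality holding $P^{X_s}$-a.s.\ transfers to a $P^{Y_s}$-a.s.\ inequality via the support containment (using, for instance, continuity of the relevant functions on the support). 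A secondary technical point is verifying that the right derivative and the generator operator genuinely combine as written when $g$ depends on $u$ both through the time slot and through $T^X_{u,t}$; Definition \ref{def:definitionfromCasterenBanach} and the stated integrability in Assumption~$2.$ are tailored precisely to license this, so I expect the bookkeeping to go through once the support transfer is handled.
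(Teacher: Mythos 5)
Your overall skeleton matches the paper's (linking process, Theorem \ref{thm:zshgmarkovmartingalestatetime} applied to $Y$ with $g(s,x)=T^X_{s,t}f(x)$, sign of the drift from Assumption~4, conclusion via \eqref{eq:supmgineq}), but there is a genuine gap at the central step, and it is exactly the step you flagged as an ``obstacle'' and left unresolved. You obtain the key identity $\frac{\partial^+}{\partial u}T^X_{u,t}f = -A^{X+}_u T^X_{u,t}f$ by invoking the backward equation \eqref{eq:evsback} from Theorem \ref{thm:forbaceqti}(1), claiming it is ``valid since $T^X_{\cdot,t}f \in \cD_+(A^{X+})$ by Assumption~1.'' That does not follow: Theorem \ref{thm:forbaceqti}(1) requires $f \in \cD_+(t)$, i.e.\ norm right-differentiability of $s \mapsto T^X_{s,t}f$ in $\mathbbm{B}$, with $A^{X+}$ the norm-limit right generator of the evolution system, whereas Assumption~1 places $T^X_{\cdot,t}f$ in the space-time domain $\cD_+(A^{X+})$ in the sense of Definition \ref{def:definitionfromCasterenBanach} — a different notion of generator, and the theorem's hypotheses do not guarantee the two coincide on this function, nor that $f \in \cD_+(t)$ at all. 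Moreover, even where a Banach-space identity holds, it holds only modulo null sets of the underlying function space, so substituting it pointwise along the $Y$-paths is precisely the support-transfer problem you deferred.

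The paper's proof supplies the missing idea: it derives the backward-type identity from the martingale problem itself, without ever using \eqref{eq:evsback}. By the tower property, $(T^X_{s,t}f(X_s))_{s \le t}$ is a martingale; by Assumptions~1 and~2 and Theorem \ref{thm:zshgmarkovmartingalestatetime} applied to $X$, the compensated process $T^X_{s,t}f(X_s) - T^X_{0,t}f(X_0) - \int_0^s \bigl( \frac{\partial^+}{\partial u} + A^{X+}_u \bigr) T^X_{u,t}f(X_u)\,du$ is also a martingale. Hence the integral term is a continuous, finite-variation martingale null at zero, so it vanishes identically by \citet[Corollary I.3.16]{JS03}, forcing $\bigl( \frac{\partial^+}{\partial u} + A^{X+}_u \bigr) T^X_{u,t}f(x) = 0$ for $x \in \supp(P^{X_u})$ for Lebesgue-a.e.\ $u$. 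Assumption~3 then transfers this identity (not, as you proposed, the generator inequality of Assumption~4) to the $Y$-paths, yielding \eqref{eq:zero}; from there your remaining steps — subtracting the vanishing term from the $Y$-martingale and reading off the supermartingale decomposition — coincide with the paper's. So your proposal is not a complete proof: its load-bearing substitution rests on hypotheses the theorem does not make, and the device that makes the identity available under the stated assumptions (two martingale representations of the same process forcing the finite-variation part to vanish) is absent from your argument.
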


\begin{proof}
By construction $(T_{s,t}^Xf(X_s))_{s \le t}$ is a martingale; this follows by the Markov property. For $u \le s$ we have
\begin{align*}
E[T_{s,t}^Xf(X_s)|\cF_u] =&~ E[E[f(X_t)|X_s]|\cF_u]\\
=&~ E[E[f(X_t)|\cF_s]|\cF_u]\\
=&~ E[f(X_t)|\cF_u]\\
=&~ T_{u,t}^Xf(X_u).
\end{align*}
On the other hand by Assumption~$2.$
\begin{align*}
T_{s,t}^Xf(X_s) - T_{0,t}^Xf(X_0) - \int_0^s \left ( \frac{\partial^+}{\partial u} + A^{X+}_u \right ) T_{u,t}^Xf(X_u) du
\end{align*} 
is a martingale as well. It follows that the integral process
\begin{align*}
\int_0^s \left ( \frac{\partial^+}{\partial u} + A^{X+}_u \right ) T_{u,t}^Xf(X_u) du
\end{align*} 
is also a martingale starting in zero. Since it is an integral with respect to the Lebesgue measure, it is of finite variation and continuous. By \citet[Corollary I.3.16]{JS03} it follows that it is zero almost surely. Thus, the integrand must be zero $\lambda \times P$ almost surely,
\begin{align*}
\left ( \frac{\partial^+}{\partial u} + A^{X+}_u \right ) T_{u,t}^Xf(X_u) = 0.
\end{align*}
Hence, for all $x \in \supp(P^{X_u})$ except a set of Lebesgue measure zero we obtain 
\begin{align*}
\left ( \frac{\partial^+}{\partial u} + A^{X+}_u \right ) T_{u,t}^Xf(x) = 0.
\end{align*}
By Assumption~$3.$, this implies that
\begin{align}
\label{eq:zero}
\left ( \frac{\partial^+}{\partial u} + A^{X+}_u \right ) T_{u,t}^Xf(Y_u) = 0.
\end{align}
$\lambda \times P$ almost surely. Therefore by Assumption~$1.$,~$2.$ and Theorem \ref{thm:zshgmarkovmartingalestatetime} applied to $T_{s,t}^Xf$ we get that 
\begin{align*}
M_s :=T_{s,t}^Xf(Y_s) - T_{0,t}^Xf(Y_0) - \int_0^s \left ( \frac{\partial^+}{\partial u} + A^{Y+}_u \right ) T_{u,t}^Xf(Y_u) du
\end{align*}
is a martingale. Combining this with \eqref{eq:zero} this implies that the following process is a martingale
\begin{align*}
T_{s,t}^Xf(Y_s) - T_{0,t}^Xf(x_0) - \int_0^s \left ( A^{Y+}_u - A^{X+}_u\right ) T_{u,t}^Xf(Y_u) du.
\end{align*}
By Assumption~$4.$ the integral is non-positive and it follows that $(T_{s,t}^Xf(Y_s))_{s \le t}$ has the representation
\begin{align*}
T_{s,t}^Xf(Y_s) = T_{0,t}^Xf(x_0) + M_t + \int_0^s \left ( A^{Y+}_u - A^{X+}_u\right ) T_{u,t}^Xf(Y_u) du.
\end{align*}
This is a supermartingale since the integral is non-positive. The assertion then follows by inequality \eqref{eq:supmgineq}.
\end{proof}

\begin{remark}
From the proof of Theorem \ref{thm:compbyprobaright} it follows in a similar way that the inverse inequality, $A^{X+}_s T^X_{s,t}f \le A^{Y+}_s T^X_{s,t}f$, implies that $(T^X_{s,t}f(Y_s))_{0 \le s \le t}$ is a submartingale and hence the expectations are ordered the other way around $E[f(Y_t)] \ge E[f(X_t)]$.
\end{remark}

Since we also have an analog martingale property for left generators, we can transfer Theorem \ref{thm:compbyprobaright} to left generators. 

\begin{theorem}
Let $(T^X_{s,t})_{s \le t}$ and $(T^Y_{s,t})_{s \le t}$ be strongly continuous and $f \in \mathbbm{B}$. For fixed $t \in \RR_+$ assume that for all $s \le t$ we have
\begin{enumerate}
\item $T^X_{\cdot,t}f \in \cD_-(A^{Y-}) \cap \cD_-(A^{Y-})$;

\item $\supp(P^{Y_s}) \subset \supp(P^{X_s})$;

\item $\frac{\partial^-}{\partial u}E[T_{u,t}^Xf(X_u)|X_s]$ and $\frac{\partial^-}{\partial u}E[T_{u,t}^Xf(Y_u)|Y_s]$ are integrable on $[0,t]$;

\item $A^{X-}_s T^X_{s,t} f \ge A^{Y-}_s T^X_{s,t} f$ a.s.
\end{enumerate}
Then it holds that 
\begin{align*}
E[f(Y_t)] \le E[f(X_t)].
\end{align*}
\end{theorem}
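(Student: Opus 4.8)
The plan is to mirror the proof of Theorem \ref{thm:compbyprobaright} essentially line by line, systematically replacing the right generators $A^{X+}, A^{Y+}$ and right derivatives $\tfrac{\partial^+}{\partial u}$ by their left counterparts $A^{X-}, A^{Y-}$ and $\tfrac{\partial^-}{\partial u}$, and invoking the left-generator analog of Theorem \ref{thm:zshgmarkovmartingalestatetime} (announced in the sentence preceding the statement) wherever the right-generator martingale property was used. The structural skeleton — martingale by the Markov property, vanishing of a continuous finite-variation martingale, transfer of the identity via the support condition, and the resulting supermartingale representation — carries over unchanged.

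First I would note that $(T^X_{s,t}f(X_s))_{s \le t}$ is a martingale purely by the Markov property and the tower rule, exactly as in the proof of Theorem \ref{thm:compbyprobaright}; this step is insensitive to the choice of generator. Next, applying the left-generator version of Theorem \ref{thm:zshgmarkovmartingalestatetime} to the $X$-process (justified by Assumptions~$1.$ and $3.$) shows that $T^X_{s,t}f(X_s) - T^X_{0,t}f(X_0) - \int_0^s (\tfrac{\partial^-}{\partial u} + A^{X-}_u) T^X_{u,t}f(X_u)\,du$ is also a martingale. Subtracting the two martingales, the integral process $\int_0^s (\tfrac{\partial^-}{\partial u} + A^{X-}_u) T^X_{u,t}f(X_u)\,du$ is a martingale starting at zero which is continuous and of finite variation, hence identically zero by \citet[Corollary I.3.16]{JS03}. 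Thus the integrand vanishes $\lambda \times P$-almost surely, and consequently $(\tfrac{\partial^-}{\partial u} + A^{X-}_u) T^X_{u,t}f(x) = 0$ for Lebesgue-almost every $u$ and all $x \in \supp(P^{X_u})$ outside a null set.

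I would then use the support condition (Assumption~$2.$) to transfer this identity to the $Y$-trajectory, obtaining $(\tfrac{\partial^-}{\partial u} + A^{X-}_u) T^X_{u,t}f(Y_u) = 0$ $\lambda \times P$-almost surely. Applying the left-generator martingale property once more, now to the $Y$-process with the space-time function $T^X_{\cdot,t}f$ (Assumptions~$1.$ and $3.$), the process $T^X_{s,t}f(Y_s) - T^X_{0,t}f(Y_0) - \int_0^s (\tfrac{\partial^-}{\partial u} + A^{Y-}_u) T^X_{u,t}f(Y_u)\,du$ is a martingale; combining it with the vanishing identity leaves $T^X_{s,t}f(Y_s) - T^X_{0,t}f(x_0) - \int_0^s (A^{Y-}_u - A^{X-}_u) T^X_{u,t}f(Y_u)\,du$ as a martingale, where I use $Y_0 = x_0$. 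By Assumption~$4.$ the integrand $(A^{Y-}_u - A^{X-}_u) T^X_{u,t}f$ is non-positive, so $(T^X_{s,t}f(Y_s))_{s \le t}$ is a supermartingale, and the claim $E[f(Y_t)] \le E[f(X_t)]$ follows from inequality \eqref{eq:supmgineq}.

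The step I expect to be the main obstacle is the left-sided counterpart of the martingale problem itself, i.e. establishing and correctly invoking the left-generator analog of Theorem \ref{thm:zshgmarkovmartingalestatetime}: one must verify that the left derivative $\tfrac{\partial^-}{\partial u}$ interacts with the conditional expectation and the Lebesgue integral exactly as the right derivative did, and pay attention to the boundary behaviour near $u = 0$ where the left derivative is not defined. The measure-theoretic transfer from $\supp(P^{X_u})$ to the $Y$-paths via Assumption~$2.$ also requires care, since one needs the exceptional Lebesgue-null set to be controlled in the $\lambda \times P$ sense uniformly enough that the vanishing identity genuinely holds along the $Y$-trajectory.
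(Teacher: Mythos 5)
Your proposal is correct and is exactly the argument the paper intends: its own proof consists of the single remark that ``the proof is similar to the proof of Theorem~\ref{thm:compbyprobaright},'' and your line-by-line substitution of left generators, left derivatives, and the left-generator analog of Theorem~\ref{thm:zshgmarkovmartingalestatetime} is precisely that similarity spelled out, including the correct implicit reading of Assumption~1 as $T^X_{\cdot,t}f \in \cD_-(A^{X-}) \cap \cD_-(A^{Y-})$ (the stated repetition of $A^{Y-}$ is evidently a typo). Your closing caveats about the boundary behaviour of the left derivative near $u=0$ and the null-set bookkeeping in the support-transfer step are reasonable points of care but do not indicate any gap relative to the paper's treatment.
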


\begin{proof}
The proof is similar to the proof of Theorem \ref{thm:compbyprobaright}.
\end{proof}

The extended generators and the random generators are defined by a local martingale property. Since the results above rely on the martingale property we are able to obtain similar results for the extended generators and the random generators. The main difference now is that we only have a local martingale property and we are not restricted to Banach spaces.

Let $(A^{X+}_t)_{t \ge 0}$ and $(A^{Y+}_t)_{t \ge 0}$ be the extended right generators for $X$ and $Y$, see Definition \ref{def:extendedgencinlar}. Let $f: S \to \RR$ be an integrable function such that $(T^X_{s,t}f(X_s))_{s \ge 0} \in \cD_+(A^{X+})$. By the martingale property of $(T^X_{s,t}f(X_s))_{s \ge 0}$ we get that $\lambda \times P$ almost surely
\begin{align*}
\left (\frac{\partial^+}{\partial s} + A^{X+}_s \right ) T^X_{s,t}f(X_s) = 0. 
\end{align*}
We then can undertake the same steps as in the proof of Theorem \ref{thm:compbyprobaright} which yields a local supermartingale property for $(T^X_{s,t}f(Y_s))_{s \ge 0}$. So we only need to specify the particular assumptions and make sure that $(T^X_{s,t}f(Y_s))_{s \ge 0}$ is a proper supermartingale.

\begin{theorem}
Let $f: S \to \RR$ be integrable. Assume that
\begin{enumerate}
\item $T^X_{\cdot,t}f \in \cD_+(A^{X+}) \cap \cD_+(A^{Y+})$,

\item $\supp(P^{Y_s}) \subset \supp(P^{X_s})$,

\item $(T^X_{s,t}f(Y_s)^-)_{s \ge 0}$ is of class (DL),

\item $A^{Y+}_s f \le A^{X+}_s f$ a.s.
\end{enumerate}
Then
\begin{align*}
E[f(Y_t)] \le E[f(X_t)].
\end{align*}
\end{theorem}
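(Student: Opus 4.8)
The plan is to mirror the proof of Theorem~\ref{thm:compbyprobaright} almost verbatim, replacing the honest martingale statements by local martingale statements and then upgrading the resulting local supermartingale to a genuine supermartingale using the class (DL) hypothesis in Assumption~3. First I would record, exactly as in the earlier proof, that $(T^X_{s,t}f(X_s))_{s\le t}$ is a martingale by the Markov property, hence by the defining local martingale property of the extended right generator in Definition~\ref{def:extendedgencinlar} (applied with the space-time function $T^X_{\cdot,t}f\in\cD_+(A^{X+})$ from Assumption~1) the finite-variation continuous process $\int_0^s(\tfrac{\partial^+}{\partial u}+A^{X+}_u)T^X_{u,t}f(X_u)\,du$ is a local martingale. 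Arguing as before via \citet[Corollary I.3.16]{JS03}, a continuous local martingale of finite variation started at zero is identically zero, so the integrand vanishes $\lambda\times P$-a.s. along $X$, and Assumption~2 ($\supp(P^{Y_s})\subset\supp(P^{X_s})$) transports this to
\begin{align*}
\left(\frac{\partial^+}{\partial u}+A^{X+}_u\right)T^X_{u,t}f(Y_u)=0\qquad\lambda\times P\text{-a.s.}
\end{align*}

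Next I would apply Definition~\ref{def:extendedgencinlar} a second time, now to the process $Y$ and the same function $T^X_{\cdot,t}f\in\cD_+(A^{Y+})$: this gives that
\begin{align*}
T^X_{s,t}f(Y_s)-T^X_{0,t}f(Y_0)-\int_0^s\left(\frac{\partial^+}{\partial u}+A^{Y+}_u\right)T^X_{u,t}f(Y_u)\,du
\end{align*}
is a local martingale. Subtracting the vanishing $X$-generated expression above, the drift collapses to $\int_0^s(A^{Y+}_u-A^{X+}_u)T^X_{u,t}f(Y_u)\,du$, whence $(T^X_{s,t}f(Y_s))_{s\le t}$ decomposes as a local martingale plus this integral, which is non-positive and non-increasing by Assumption~4. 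Therefore the linking process is a \emph{local} supermartingale.

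The genuinely new step — and the one I expect to be the main obstacle — is promoting this local supermartingale to a true supermartingale so that the comparison inequality \eqref{eq:supmgineq} is available. Here Assumption~3, that the negative part $(T^X_{s,t}f(Y_s)^-)_{s\ge 0}$ is of class (DL), is precisely what I would invoke: a local supermartingale (equivalently, a local martingale plus a non-increasing predictable finite-variation process) whose negative part is of class (DL) is a genuine supermartingale. The cleanest route is to note that the local martingale part, being the sum of a class-(DL) negative-part control and the non-increasing drift, inherits a uniform-integrability-on-compacts bound along a localizing sequence, so one can pass to the limit in the supermartingale inequality $E[T^X_{\sigma_n\wedge t,t}f(Y_{\sigma_n\wedge t})\mid\cF_s]\le T^X_{\sigma_n\wedge s,t}f(Y_{\sigma_n\wedge s})$ using Fatou on the non-negative side and dominated convergence (via class (DL)) on the negative side. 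Once the supermartingale property is secured, the conclusion is immediate: evaluating at the endpoints and using $T^X_{t,t}f=f$ together with $T^X_{0,t}f(Y_0)=E[f(X_t)]$ gives
\begin{align*}
E[f(Y_t)]=E[T^X_{t,t}f(Y_t)]\le T^X_{0,t}f(x_0)=E[f(X_t)],
\end{align*}
which is the assertion. I would emphasise in the write-up that the only substantive departures from Theorem~\ref{thm:compbyprobaright} are the two appeals to the \emph{local} martingale property of the extended generators and the class-(DL) localisation argument; everything else, including the support-transfer step and the sign bookkeeping, is identical.
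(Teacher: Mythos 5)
Your proposal follows the paper's proof essentially step for step: vanishing of the $X$-drift via the local-martingale/finite-variation argument and \citet[Corollary I.3.16]{JS03}, transfer to $Y$ by the support condition, the local supermartingale decomposition with non-positive drift from Assumption~4, and the class-(DL) upgrade to a genuine supermartingale followed by inequality \eqref{eq:supmgineq}. The only difference is one of detail: you spell out the Fatou/uniform-integrability localisation behind the (DL) upgrade, which the paper compresses into the single assertion that Assumption~3 makes the local supermartingale proper.
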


\begin{proof}
The proof is similar to the proof of Theorem \ref{thm:compbyprobaright}. As mentioned above we have $\lambda \times P$ almost surely
\begin{align*}
\left (\frac{\partial^+}{\partial s} + A^{X+}_s \right ) T^X_{s,t}f(X_s) = 0. 
\end{align*}
By Assumption~$2.$ we obtain that $\left (\frac{\partial}{\partial s} + A^{X+}_s \right ) T^X_{s,t}f(Y_s) = 0$ $\lambda \times P$ almost surely as well. On the other hand by the definition of the extended generator it holds that
\begin{align*}
T^X_{s,t}f(Y_s) - T^X_{0,t}f(x_0) - \int_0^s \left ( \frac{\partial^+}{\partial u} + A^{Y+}_u \right ) T^X_{u,t}f(Y_u) du
\end{align*}
is a local martingale. We substitute the time derivative:
\begin{align*}
T^X_{s,t}f(Y_s) - T^X_{0,t}f(x_0) - \int_0^s \left ( A^{Y+}_u - A^{X+}_u\right ) T^X_{u,t}f(Y_u) du.
\end{align*}
The integral is non-positive and hence $(T^X_{s,t}f(Y_s))_{0 \le s \le t}$ is a local supermartingale. By Assumption~$3.$ it is a proper supermartingale. The assertion now follows as in Theorem \ref{thm:compbyprobaright}.
\end{proof}

An analog result also holds in the case of extended left generators.

Finally, if we consider random generators instead of extended generators, we get a similar comparison result. Here we have the advantage that in Definition \ref{def:randomgencinlar} there appears no partial derivative. This means that we can proceed more directly.

\begin{theorem}
Let $F \in \mathscr{V}^+$ be predictable and $f$ be integrable. Assume that $X$ and $Y$ possess $F$-random generators $(A^X_t)_{t \ge 0}$ and $(A^Y_t)_{t \ge 0}$. Further, let
\begin{enumerate}
\item $X_0 \sim Y_0$,

\item $(f(X_t)- f(Y_t))_{t \ge 0}$ be of class (DL),

\item $A^Y_s f \le A^X_s f$ a.s.
\end{enumerate}
Then we obtain
\begin{align*}
E[f(Y_t)] \le E[f(X_t)].
\end{align*}
\end{theorem}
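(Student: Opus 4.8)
The plan is to follow the same pattern as the proof of Theorem \ref{thm:compbyprobaright}, but to exploit the observation that the random-generator definition contains no partial time derivative, so that the two processes can be compared directly rather than through a linking process. First I would apply Definition \ref{def:randomgencinlar} to both $X$ and $Y$: by assumption both possess $F$-random generators, so
\begin{align*}
M^X_t := f(X_t) - f(X_0) - \int_0^t A^X_s f \, dF_s, \qquad M^Y_t := f(Y_t) - f(Y_0) - \int_0^t A^Y_s f \, dF_s
\end{align*}
are local martingales with respect to the common filtration on which $X$ and $Y$ are defined.

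Next I would subtract the two decompositions. Since a difference of local martingales is again a local martingale, the process
\begin{align*}
N_t := M^X_t - M^Y_t = \big(f(X_t) - f(Y_t)\big) - \big(f(X_0) - f(Y_0)\big) - \int_0^t \big(A^X_s f - A^Y_s f\big)\, dF_s
\end{align*}
is a local martingale with $N_0 = 0$. By Assumption~$3.$ the integrand $A^X_s f - A^Y_s f$ is nonnegative a.s., and since $F \in \mathscr{V}^+$ is nondecreasing, the finite-variation process $A_t := \int_0^t (A^X_s f - A^Y_s f)\, dF_s$ is predictable (by Definition \ref{def:randomgencinlar}) and increasing. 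Rearranging, the identity $f(X_t) - f(Y_t) = (f(X_0) - f(Y_0)) + A_t + N_t$ exhibits $f(X_t) - f(Y_t)$ as a local submartingale whose predictable finite-variation part is the increasing process $A$.

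The decisive and most delicate step is to upgrade the local martingale $N$ to a genuine martingale so that expectations may be taken; this is exactly where Assumption~$2.$ enters. Since $f(X_t)-f(Y_t)$ is of class (DL), the family $\{N_\tau : \tau \le t \text{ a stopping time}\}$ should inherit uniform integrability, because the initial term $f(X_0)-f(Y_0)$ is a fixed integrable random variable and $0 \le A_\tau \le A_t$ is dominated by the increasing process $A_t$; equivalently, applying the Doob--Meyer decomposition theorem to the class-(DL) submartingale $f(X_t)-f(Y_t)$ together with uniqueness of the predictable compensator $A$ forces the martingale part $N$ to be a true martingale. A local martingale of class (DL) being a true martingale then yields $E[N_t] = E[N_0] = 0$. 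I expect this local-to-true martingale passage, and in particular securing the integrability of $A_t$ that makes the uniform-integrability argument go through, to be the main obstacle.

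Finally I would take expectations in the rearranged identity. By Assumption~$1.$ we have $X_0 \sim Y_0$, hence $E[f(X_0)] = E[f(Y_0)]$ and the initial term vanishes; the term $E[A_t]$ is nonnegative since $A_t \ge 0$; and $E[N_t] = 0$ by the previous step. Therefore
\begin{align*}
E[f(X_t)] - E[f(Y_t)] = E[A_t] \ge 0,
\end{align*}
which is precisely the claimed inequality $E[f(Y_t)] \le E[f(X_t)]$.
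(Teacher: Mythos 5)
Your proposal is correct and takes essentially the same route as the paper: subtract the two local-martingale decompositions supplied by the $F$-random generators, use Assumption~3 to sign the $dF$-integral and obtain a (local) submartingale, invoke the class (DL) assumption to pass to expectations, and finish with $X_0 \sim Y_0$. Your Doob--Meyer detour merely spells out the local-to-true upgrade that the paper leaves implicit in the phrase ``we conclude by Assumption 2.''
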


\begin{proof}
Since $(A^X_t)_{t \ge 0}$ and $(A^Y_t)_{t \ge 0}$ are $F$-random generators we have that 
\begin{align*}
f(X_t) - f(X_0) - \int_0^t A^X_s f dF_s 
\end{align*}
and 
\begin{align*}
f(Y_t) - f(Y_0) - \int_0^t A^Y_s f dF_s 
\end{align*}
are local martingales. It follows that 
\begin{align*}
f(X_t) - f(X_0) - \int_0^t A^X_s f dF_s - f(Y_t) + f(Y_0) + \int_0^t A^Y_s f dF_s\\
 = f(X_t) - f(X_0) - f(Y_t) + f(Y_0) + \int_0^t A^Y_sf - A^X_sf dF_s
\end{align*}
is a local martingale as well. The integral is non-positive and it follows that $(f(X_t) - f(X_0) - f(Y_t) + f(Y_0))_{t \ge 0}$ is a local submartingale. We conclude by Assumption $2.$ that
\begin{align*}
E[f(X_t) - f(Y_t) - f(X_0) + f(Y_0)] \ge 0.
\end{align*}
The assertion follows by Assumptions~$1.$
\end{proof}

The martingale comparison method as developed in this paper allows a comparison of two Markov processes under general alternative regularity conditions compared to the results by the evolution approach.In particular we dismiss with the propagation of order property and also consider the case of random generators which allows for further interesting applications like Markov processes with fixed jump times.

\bibliographystyle{chicago}

\begin{thebibliography}{35}

\bibitem[\protect\citeauthoryear{B\"ottcher}{B\"ottcher}{2014}]{Bo14}
B\"ottcher B. (2014).
\newblock Feller evolution systems: Generators and approximation.
\newblock {\em Stochastics and Dynamics,\/}~{\em 14\/}, 15pp.

\bibitem[\protect\citeauthoryear{\c{C}inlar, Jacod, Protter and Sharpe}{\c{C}inlar et al.}{1980}]{Ci80}
\c{C}inlar E., Jacod J., Protter P., Sharpe M.J. (1980).
\newblock Semimartingales and Markov processes.
\newblock {\em Zeitschrift f\"{u}r Wahrscheinlichkeitstheorie,\/}~{\em Volume 54\/}, 161--219.

\bibitem[\protect\citeauthoryear{van Casteren}{van Casteren}{2011}]{Ca11}
van Casteren J.A. (2011).
\newblock Markov processes, Feller semigroups and evolution equations.
\newblock {\em World Scientific\/}.

\bibitem[\protect\citeauthoryear{Cox, Fleischmann and Grven}{Cox et al.}{1996}]{Co96}
Cox J.T., Fleischmann K., Greven A. (1996).
\newblock Comparison of interacting diffusions and an application to their ergodic theory.
\newblock {\em Probability Theory and Related Fields,\/}~{\em 105,\/} 513--528.

\bibitem[\protect\citeauthoryear{Criens}{Criens}{2017}]{Cr17}
Criens D. (2017).
\newblock Monotone and convex stochastic orders for processes with independent increments.
\newblock {\em arXiv:1606.04993 [math.PR]\/}.

\bibitem[\protect\citeauthoryear{Criens}{Criens}{2019}]{Cr19}
Criens D. (2019).
\newblock Couplings for processes with independent increments.
\newblock {\em Statist. Probab. Lett.\/}~{\em 146\/}, 161--167.

\bibitem[\protect\citeauthoryear{Daduna and Szekli}{Daduna and Szekli}{2006}]{DS06}
Daduna H., Szekli R. (2006).
\newblock Dependence ordering for Markov processes on partially ordered spaces.
\newblock {\em Jornal of Applied Probability,\/}~{\em 43,\/} 793--814.

\bibitem[\protect\citeauthoryear{Ethier and Kurtz}{Ethier and Kurtz}{2005}]{EK05}
Ethier S.N., Kurtz T.G. (2005).
\newblock Markov processes, characterization and convergence.
\newblock {\em John Wiley \& Sons\/}.

\bibitem[\protect\citeauthoryear{Gulisashvili and van Casteren}{Gulisashvili and van Casteren}{2006}]{GC06}
Gulisashvili A., van Casteren J.A. (2006).
\newblock Non-autonomous Kato classes and Feynman--Kac propagators.
\newblock {\em World Scientific\/}.

\bibitem[\protect\citeauthoryear{Gushchin and Mordecki}{Gushchin and Mordecki}{2002}]{GM02}
Gushchin A.A., Mordecki E. (2002).
\newblock Bounds on option prices for semimartingale market models.
\newblock {\em Proceeding of the Steklov Institute of Mathematics,\/}~{\em 273\/}, 73--113.

\bibitem[\protect\citeauthoryear{Jacod and Shiryaev}{Jacod and Shiryaev}{2003}]{JS03}
Jacod J., Shiryaev A.N. (2003).
\newblock Limit theorems for stochastic processes.
\newblock {\em Springer\/}.

\bibitem[\protect\citeauthoryear{K\"opfer}{K\"opfer}{2019}]{Ko19}
K\"opfer B. (2019).
\newblock Comparison of stochastic processes by Markov projection and functional It\^o calculus.
\newblock {\em Phd-Thesis, Albert-Ludwigs-Universit\"at Freiburg\/}.

\bibitem[\protect\citeauthoryear{Krasin and Melnikov}{Krasin and Melnikov}{2009}]{KM09}
Krasin V.Yu., Melnikov A.V. (2009).
\newblock On comparison theorem and its application to finance.
\newblock {\em In: Optimality and Risk -- Modern Trends in Mathematical Finance, Springer\/}, 171--181.

\bibitem[\protect\citeauthoryear{Massey}{Massey}{1987}]{Ma87}
Massey W.A. (1987).
\newblock Stochastic ordering for Markov processes on partially ordered spaces.
\newblock {\em Mathematics of Operations Research,\/}~{\em 12\/}, 350--367.

\bibitem[\protect\citeauthoryear{R\"uschendorf}{R\"uschendorf}{2008}]{Ru08}
R\"uschendorf L. (2008).
\newblock On a comparison result for Markov processes.
\newblock {\em Journal of Applied Probability,\/}~{\em 45\/}, 279--286.

\bibitem[\protect\citeauthoryear{R\"uschendorf and Wolf}{R\"uschendorf and Wolf}{2011}]{RW11}
R\"uschendorf L., Wolf V. (2011).
\newblock Comparison of Markov processes via infintesimal generators.
\newblock {\em Statistics and Decisions,\/}~{\em 28(2)\/}, 151--175.

\bibitem[\protect\citeauthoryear{R\"uschendorf, Schnurr and Wolf}{R\"uschendorf et al.}{2016}]{RWS16}
R\"uschendorf L., Schnurr A., Wolf V. (2016).
\newblock Comparison of time-inhomogeneous Markov processes.
\newblock {\em Advances in Applied Probability,\/}~{\em 48\/}, 1015--1044.

\end{thebibliography}

\end{document}